\newtheorem{theor}{Theorem}[section]
\newtheorem{examp}{Example}[section]
\newtheorem{prop}{Proposition}[section]
\newtheorem{cor}{Corollary}[section]
\theoremstyle{definition}
\newtheorem{rem}{Remark}
\def\car{\mathop{\rm char}\nolimits}
\definecolor{lime}{HTML}{A6CE39}
\DeclareRobustCommand{\orcidicon}{%
	\begin{tikzpicture}
	\draw[lime, fill=lime] (0,0)
	circle [radius=0.16]
	node[white] {{\fontfamily{qag}\selectfont \tiny ID}};
	\draw[white, fill=white] (-0.0625,0.095)
	circle [radius=0.007];
	\end{tikzpicture}
	\hspace{-2mm}
}
\xdef\csname orcid\x\endcsname{\noexpand\href{https://orcid.org/\csname orcidauthor\x\endcsname}{\noexpand\orcidicon}}
\title[Cayley unitary elements \& oriented involutions]%
      {Cayley unitary elements in group algebras \\ under oriented involutions}
\author[J. H. Castillo]{John H. Castillo\orcidC{}}
\address{John H. Castillo, Departamento de Matem\'aticas y Estad\'istica, Universidad de Nari\~no}
\email{jhcastillo@udenar.edu.co}
\author[Y. W. G\'omez-Esp\'indola]{Yzel Wlly G\'omez-Esp\'indola\orcidB{}}
\address{Yzel Wlly G\'omez-Esp\'indola, Escuela de Matem\'aticas, Universidad Industrial de Santander, Santander, Colombia}
\email{wage03@gmail.com}
\author[A. Holgu\'in-Villa]{Alexander Holgu\'in-Villa\orcidA{}}
\address{Alexander Holgu\'in-Villa, Escuela de Matem\'aticas, Universidad Industrial de Santander, Santander, Colombia}
\email{aholguin@uis.edu.co}
\keywords{Group algebras; Skew elements; Cayley unitary elements; Oriented involutions.}
\subjclass[2010]{16U60, 16W10, 16S34, 16R50.}
\begin{document}
\maketitle
 \noindent
  \renewcommand{\refname}{References}

%%%%%%%%%%%%%%%%%%%%%%%%%%%%%%%%%%%%%%%%%%%%%%%%%%%%%%%%%%%%%%%%%%

\begin{abstract}
   Let $\mathbf{F}$ be a real extension of $\mathbb{Q}$, $G$ a finite group and $\mathbf{F}G$ its group algebra.
   Given both a group homomorphism $\sigma:G\rightarrow \{\pm1\}$ (called an orientation) and a group involution
   $^\ast:G \rightarrow G$ such that $gg^\ast\in N=ker(\sigma)$, an oriented group involution $\circledast$ of
   $\mathbf{F}G$ is defined by  $\alpha=\sum_{g\in G}\alpha_{g}g \mapsto \alpha^\circledast=\sum_{g\in G}\alpha_{g}\sigma(g)g^{\ast}$.
   In this paper, in case the involution on $G$ is the classical one, $x\mapsto x^{-1}$, $\beta=x+x^{-1}$ is a skew-symmetric element in $\mathbf{F}G$ such that $1+\beta$ is invertible, for $x\in G$ with $\sigma(x)=-1$, we consider Cayley unitary elements built out of $\beta$. We prove that the coefficients of $(1+\beta)^{-1}$ involve
   an interesting sequence which is a Fibonacci-like sequence.

  \end{abstract}

%%%%%%%%%%%%%%%%%%%%%%%%%%%%%%%%%%%%%%%%%%%%%%%%%%%%%%%%%%%%%%%%%%%%%%%%%%%%%%%
\section{Introduction}
%%%%%%%%%%%%%%%%%%%%%%%%%%%%%%%%%%%%%%%%%%%%%%%%%%%%%%%%%%%%%%%%%%%%%%%%%%%%%%%%

Let $\mathbf{F}G$ denote the group algebra of the group $G$ over the field $\mathbf{F}$ with $\car(\mathbf{F})\neq 2$.
Any involution $^\ast: G \rightarrow G$ can be extended $\mathbf{F}$-linearly to an algebra involution of $\mathbf{F}G$ in
an obvious way: if $\alpha=\sum_{g\in G}\alpha_{g}g\in \mathbf{F}G$, then $\alpha^{\ast}=\sum_{g\in G}\alpha_{g}g^{\ast}$.
Such a map is called a group involution of $\mathbf{F}G$. A natural example is the so-called \emph{classical
involution}, which is induced from the map $g\mapsto g^\ast=g^{-1}$, $g\in G$.

Let $\sigma:G\rightarrow \{\pm 1\}$ be a non-trivial homomorphism (called an \emph{orientation} of $G$). If
$^\ast:G \rightarrow G$ is a group involution, an {\it oriented group involution} of $\mathbf{F}G$ is defined by

\begin{equation}\label{eq0}
\alpha=\sum_{g\in G} \alpha_gg \mapsto \alpha^\circledast=\sum_{g\in G} \alpha_g\sigma(g)g^{\ast}.
\end{equation}

Notice that, as $\sigma$ is non-trivial, $\car(\mathbf{F})$ must be different from $2$. It is clear that,
$\alpha\mapsto \alpha^\circledast$ is an involution in $\mathbf{\mathbf{F}}G$ if and only if $gg^\ast\in N=ker(\sigma)$
for all $g\in G$.

In the case when the involution on $G$ is the classical involution, the map $\circledast$ is precisely
the oriented involution introduced by S.~P. Novikov, \cite{Nov:70}, in the context of $K$-theory.

We denote by $\mathbf{F}G^{-}=\{\beta\in \mathbf{F}G:\beta^\circledast=-\beta\}$ the Lie subalgebra of skew-symmetric
elements of $\mathbf{F}G$ under the usual Lie bracket, $[\alpha, \gamma]=\alpha\gamma-\gamma\alpha$. It is not hard to see
that $\mathbf{F}G^{-}$, as an $\mathbf{F}$-module, is spanned by the set
\begin{equation}\label{eq01}
\mathcal{L}=\{g-g^{\ast}: g^{\ast}\ne g, g\in N\}\cup \{g: g^{\ast}=g, g\notin N\}\cup \{g+g^{\ast}: g^{\ast}\neq g, g\notin N\}.
\end{equation}
Writing $\mathcal{U}(\mathbf{F}G)$ for the group of units of $\mathbf{F}G$, we denote by $\mathcal{U}^+(\mathbf{F}G)$ the set
of symmetric units of $\mathbf{F}G$, namely, those units fixed by $\circledast$ and $\mathcal{U}n(\mathbf{F}G)$ the subgroup
of unitary units, i.e., $\mathcal{U}n(\mathbf{F}G)=\{u\in \mathcal{U}(\mathbf{F}G): uu^{\circledast}=1\}$. Note that if
$\beta\in \mathbf{F}G^{-}$ then $1+\beta$ is invertible in $\mathbf{F}G$ if and only if $1-\beta$ is invertible too. Furthermore,
if $\beta\in \mathbf{F}G$ is a skew-symmetric element such that $1+\beta$ is invertible, $u_{[\beta]}=(1-\beta)(1+\beta)^{-1}$
belongs to $\mathcal{U}n(\mathbf{F}G)$ and is called a {\it Cayley unitary element built out of $\beta$}. We let
$\mathcal{U}n^{C}(\mathbf{F}G)$ denote the set of all Cayley unitary elements of $\mathbf{F}G$. Besides, it is easy to
check that $u^{-1}_{[\beta]}=u_{[-\beta]}$.

An interesting problem in the study of group algebras is that of relating properties of the group of units $\mathcal{U}(\mathbf{F}G)$
of $\mathbf{F}G$ to properties of the group algebra or more specifically to the structure of the group $G$. In the late 70's B. Hartley
conjectured that if $G$ is a torsion group and $\mathcal{U}(\mathbf{F}G)$ satisfies a group identity, then $\mathbf{F}G$ must satisfy
a polynomial identity. The answer to this conjecture lead to obtain a complete characterization of group algebras $\mathbf{F}G$ for which
$\mathcal{U}(\mathbf{F}G)$ satisfies a group identity. However, particular group identities are also of considerable interest. For instance,
the conditions under which $\mathcal{U}(\mathbf{F}G)$ is nilpotent, $n$-Engel for some $n$, bounded Engel and solvable also has been
studied extensively. For further details about these results, see G. Lee \cite{Lee:10} and the references quoted therein.

When we consider an involution $\ast$ on the group $G$ extended $\mathbf{F}$-linearly to $\mathbf{F}G$, also denoted by $\ast$, one can
prove that some special subsets of the unit group $\mathcal{U}(\mathbf{F}G)$ constructed with this involution and satisfying a group
identity determine either the structure of the whole group units $\mathcal{U}(\mathbf{F}G)$ or the whole group algebra $\mathbf{F}G$.
For instance, when $\ast$ is the classical involution induced from $g\mapsto g^{-1}, g\in G$, was confirmed a stronger version of the
Hartley's conjecture by showing that, if $G$ is a torsion group, $\mathbf{F}$ is a infinite field with $\car(\mathbf{F})\neq 2$ and
$\mathcal{U}^+(\mathbf{F}G)$ satisfies a group identity, then $\mathbf{F}G$ satisfies a polynomial identity. It should be noted that, in
this case, specific group identities such as nilpotency and solvability, have also received a good deal of attention, and again we refer
the reader to \cite{Lee:10} for an overview. Also, in recent years, identities satisfied by the set of symmetric units with respect to
other involutions have been examined; see A. Dooms and M. Ruiz Mar\'in \cite{DRM:07}, A. Giambruno, C. Polcino Milies and S. K. Sehgal
\cite{GPS:09i, GPS:10}, A. Holguín-Villa and J. H. Castillo \cite{holguin2023group} and G. Lee, S. K. Sehgal and E. Spinelli \cite{LSS10}. A similar question may be posed for the subgroup of unitary
units, i.e., to determine the extent to which the properties of $\mathcal{U}n(\mathbf{F}G)$ determine either properties of $\mathcal{U}(\mathbf{F}G)$
or $\mathbf{F}G$. Only a few articles have examined identities satisfied by the unitary units. For instance, J. Gon\c calves and D. Passman
\cite{GonPass:01} discussed when $\mathcal{U}n(\mathbf{F}G)$ contains a non-abelian free group. A. Giambruno and C. Polcino Milies \cite{GP:03}
explored group identities satisfied by $\mathcal{U}n(\mathbf{F}G)$, for semiprime group algebras $\mathbf{F}G$. Recently, G. Lee, S. K. Sehgal
and E. Spinelli \cite{LSS14, LSS18} characterized, with mild restrictions, the groups $G$ such that $\mathcal{U}n(\mathbf{F}G)$ is nilpotent;
which in many cases is usually enough to imply that $\mathcal{U}(\mathbf{F}G)$ is nilpotent, but there are exceptions. Otherwise, by allowing
group involutions on $\mathbf{F}G$, O. Broche, A. Dooms and M. Ruiz Mar\'in \cite{BDR:09} gave several partial results.

It is well known that the Cayley unitary elements play an important role in the ring $M_n(D)$ of $n\times n$ matrices over a division ring $D$ with
$\car(D)\neq 2$. For instance, in \cite{ChuangLee:95} C. L. Chuang and P. H. Lee showed that if the involution defined in $M_n(D)$ is non-identity
on $D$, a unitary unit in $M_n(D)$ can be written as a product of two Cayley unitary elements. Now, in the setting of the group algebras and when
$\ast$ is the classical one, Gon\c calves and Passman in \cite{GonPass:01} classified all finite groups $G$ such that $\mathcal{U}n(\mathbf{F}G)$
does not contain a free group of rank $2$, provided the field $\mathbf{F}$ is non-absolute of characteristic not $2$, i.e., either $\car\big(\mathbf{F}\big)=0$
or $\car\big(\mathbf{F}\big)>0$ and $\mathbf{F}$ contains an element transcendental over its prime subfield. Moreover, they built a pair of unitary
units in $\mathbf{F}G$ from Cayley unitary elements and applied their results to verify that the two elements essentially generate a free group. In
\cite{GP:03} Giambruno and Polcino Milies, in case $\car(\mathbf{F})=0$, classified the torsion groups $G$ for which $\mathcal{U}n(\mathbf{F}G)$ does
not contain a free group of rank $2$, provided that $\mathbf{F}G^{-}$ is Lie nilpotent. The relation between the existence of free groups in
$\mathcal{U}n(\mathbf{F}G)$ and the Lie nilpotence of $\mathbf{F}G^{-}$ should not be surprising if we consider the general linear group. Indeed,
Chuang and Lee showed in \cite{ChuangLee:95} that the Cayley unitary elements, which are built out of skew-symmetric elements, fill in the orthogonal
group or a subgroup of the symplectic group of index $2$.

On the other hand, A. C. Vieira and V. Ribeiro in \cite{VieiraRibeiro:06}, in case $\mathbf{F}$ is a real extension of $\mathbb{Q}$ considered
Cayley unitary elements built out of skew-symmetric elements $\beta=q(x-x^{-1})$ in $\mathbf{F}G$ such that $1+\beta$ is invertible
in $\mathbf{F}G$, for $q\in \mathbf{F}$ and $x\in G$. The constructions involve an interesting sequence in the coefficients of
$(1 + \beta)^{-1}$ which is the Fibonacci sequence when $q=1$.

%%%%%%%%%%%%%%%%%%%%%%%%%%%%%%%%%%%%%%%%%%%%%%%%%%%%%%%%%%%%%%%%%%%%%%%%%%%%%%%
\section{Cayley Unitary Elements and classical involution}
%%%%%%%%%%%%%%%%%%%%%%%%%%%%%%%%%%%%%%%%%%%%%%%%%%%%%%%%%%%%%%%%%%%%%%%%%%%%%%%

Let $\mathbf{F}$ be a field of characteristic different from $2$ and let $\mathbf{F}G$ be the group algebra of a finite group $G$
over $\mathbf{F}$. Denote by $^\ast : \mathbf{F}G\rightarrow  \mathbf{F}G$ the natural $\mathbf{F}$-involution of $\mathbf{F}G$
induced by setting $x^{\ast} = x^{-1}$ for all $x\in G$. By expression \eqref{eq01}, it is clear that the Lie subalgebra of skew-symmetric
elements defined by $\mathbf{F}G^{-}=\{\beta\in \mathbf{F}G: \beta^{\ast}=-\beta\}$, as a $\mathbf{F}$-module, is generated by the
elements $x-x^{-1}$, for $x\in G$. Furthermore, when $\beta$ is a skew-symmetric element in $\mathbf{F}G$ such that $1+\beta$ is invertible,
the element $u_{[\beta]}=(1-\beta)(1+\beta)^{-1}$ belongs to $\mathcal{U}n(\mathbf{F}G)$ and is called a Cayley unitary element built out of
$\beta$. We let $\mathcal{U}n^{\mathbf{C}}(\mathbf{F}G)$ denote the set of all Cayley unitary elements of $\mathbf{F}G$.

%%%%%%%%%%%%%%%%%%%%%%%%%%%%%%%%%%%%%%%%%%%%%%%%%%%%%%%%%%%%%%%%%%%%%%%%%%%%%%%
\subsection{Some elementary properties}
%%%%%%%%%%%%%%%%%%%%%%%%%%%%%%%%%%%%%%%%%%%%%%%%%%%%%%%%%%%%%%%%%%%%%%%%%%%%%%%

The Cayley unitary elements satisfy interesting properties. For example, since that $u_{[\beta]}u_{[-\beta]}=1=u_{[-\beta]}u_{[\beta]}$, thus $u_{[\beta]}^{-1}=u_{[-\beta]}$, for $\beta\in \mathbf{F}G^{-}$. The following result allow us to determine when a unitary element is a Cayley unitary element and when it
is a product of two Cayley unitary elements, respectively.

\begin{prop}\label{Propo-cay.sii.inver}
Let $R$ be a ring with involution $\ast$ in which $2$ is invertible. Then the following properties hold:
\begin{enumerate}
\item \emph{(\cite[Lemma 1]{ChuangLee:95})} A unitary element $u$ is a Cayley unitary element if and only if $1+u$
      is invertible in $R$.

\item \emph{(\cite[Lemma 2]{ChuangLee:95})} A unitary element $u$ is a product of two Cayley unitary elements if
      and only if $(1+u)-(1-u)k$ is invertible in $R$, for some skew-symmetric element $k$, with $1+k$ invertible in $R$.

\end{enumerate}
\end{prop}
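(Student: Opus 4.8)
The plan is to prove the two statements of Proposition~\ref{Propo-cay.sii.inver} by exploiting the formal algebraic identities that relate a unit $u$, a putative skew element $k$, and the Cayley transform. Both parts follow the same pattern: reduce the assertion ``$u$ arises via Cayley transform(s)'' to an invertibility condition, with the main work being to verify that the element constructed from the given invertible quantity genuinely is \emph{skew-symmetric} and genuinely is \emph{unitary}, and that the construction inverts the Cayley map.

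\smallskip

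\noindent\textbf{Part (1).} First I would establish the easy direction: if $u=u_{[\beta]}=(1-\beta)(1+\beta)^{-1}$ for some skew $\beta$ with $1+\beta$ invertible, then $1+u=(1+\beta)(1+\beta)^{-1}+(1-\beta)(1+\beta)^{-1}=2(1+\beta)^{-1}$, which is invertible since $2$ is a unit in $R$. For the converse, suppose $u$ is unitary (so $u^\ast u=1=uu^\ast$, equivalently $u^\ast=u^{-1}$) and $1+u$ is invertible. The natural candidate is $\beta=(1-u)(1+u)^{-1}$, i.e.\ the inverse Cayley transform. The three things to check are: (i) $\beta$ is skew, $\beta^\ast=-\beta$; (ii) $1+\beta$ is invertible; (iii) $u_{[\beta]}=u$. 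For (i) one computes $\beta^\ast=\big((1+u)^{-1}\big)^\ast(1-u)^\ast=(1+u^\ast)^{-1}(1-u^\ast)=(1+u^{-1})^{-1}(1-u^{-1})$; multiplying numerator and denominator appropriately by $u$ and using that $u$ commutes with $1\pm u$ and with $(1+u)^{-1}$, this simplifies to $-(1-u)(1+u)^{-1}=-\beta$. (The commutation $u(1+u)^{-1}=(1+u)^{-1}u$ is the routine fact one needs throughout.) For (ii), $1+\beta=(1+u)(1+u)^{-1}+(1-u)(1+u)^{-1}=2(1+u)^{-1}$, again invertible. For (iii), $u_{[\beta]}=(1-\beta)(1+\beta)^{-1}$; since $1-\beta=2u(1+u)^{-1}$ and $(1+\beta)^{-1}=\tfrac12(1+u)$, we get $u_{[\beta]}=u(1+u)^{-1}(1+u)=u$, using commutativity.

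\smallskip

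\noindent\textbf{Part (2).} Again the forward direction is the computation and the reverse direction needs a candidate. If $u=u_{[\beta_1]}u_{[\beta_2]}$, then using $u_{[\beta_2]}^{-1}=u_{[-\beta_2]}$ we have $u\,u_{[-\beta_2]}^{-1}=u_{[\beta_1]}$, so $u\,u_{[\beta_2]}=u_{[\beta_1]}$ is a single Cayley unitary; by Part~(1) this means $1+u\,u_{[\beta_2]}$ is invertible. Writing $u_{[\beta_2]}=(1-\beta_2)(1+\beta_2)^{-1}$ and setting $k=\beta_2$ (skew, with $1+k$ invertible), one has $1+u\,u_{[k]}=\big((1+k)+u(1-k)\big)(1+k)^{-1}=\big((1+u)-(1-u)k\big)(1+k)^{-1}$, so invertibility of $1+u\,u_{[k]}$ is equivalent to invertibility of $(1+u)-(1-u)k$. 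Conversely, given a skew $k$ with $1+k$ invertible and $(1+u)-(1-u)k$ invertible, run this backwards: $v:=u\,u_{[k]}$ is unitary (product of unitaries) and $1+v=\big((1+u)-(1-u)k\big)(1+k)^{-1}$ is invertible, so by Part~(1) $v$ is a Cayley unitary $u_{[\beta_1]}$; then $u=v\,u_{[k]}^{-1}=u_{[\beta_1]}u_{[-k]}$ exhibits $u$ as a product of two Cayley unitary elements.

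\smallskip

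\noindent\textbf{Main obstacle.} There is no deep obstacle here — the statement is essentially a bookkeeping exercise. The one place demanding care is the verification in Part~(1)(i) that $\beta^\ast=-\beta$: it relies on $u$ being unitary (not merely a unit) so that $u^\ast=u^{-1}$, and on the fact that $u$, $u^{-1}$, $(1\pm u)$, and $(1\pm u)^{-1}$ all lie in a common commutative subring, which must be invoked cleanly to avoid left/right-inverse pitfalls in the noncommutative ring $R$. Everything else is the arithmetic of the identities $1\pm u_{[\beta]}$ displayed above, which are exactly the identities already used implicitly in the excerpt when noting $u_{[\beta]}^{-1}=u_{[-\beta]}$.
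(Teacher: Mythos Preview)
The paper does not supply its own proof of this proposition; it merely quotes the two lemmas from \cite{ChuangLee:95}. Your approach is the standard one, and Part~(1) is correct.

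In Part~(2), however, the execution contains sign slips that make the argument as written incorrect. From $u=u_{[\beta_1]}u_{[\beta_2]}$ one obtains $u\,u_{[\beta_2]}^{-1}=u\,u_{[-\beta_2]}=u_{[\beta_1]}$, \emph{not} $u\,u_{[\beta_2]}=u_{[\beta_1]}$ as you wrote (your expression $u\,u_{[-\beta_2]}^{-1}$ carries both a minus sign and an inverse, which cancel). Consequently, the quantity whose invertibility must be analysed is $1+u\,u_{[-k]}$ with $k=\beta_2$, and indeed
\[
1+u\,u_{[-k]}=\bigl((1-k)+u(1+k)\bigr)(1-k)^{-1}=\bigl((1+u)-(1-u)k\bigr)(1-k)^{-1}.
\]
By contrast, your displayed identity $1+u\,u_{[k]}=\bigl((1+k)+u(1-k)\bigr)(1+k)^{-1}=\bigl((1+u)-(1-u)k\bigr)(1+k)^{-1}$ is wrong at the second equality: $(1+k)+u(1-k)=(1+u)+(1-u)k$, with a plus sign. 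The two errors happen to land on the desired expression, but the chain of equalities is invalid. The same correction applies to the converse: set $v=u\,u_{[-k]}$ (not $u\,u_{[k]}$), so that $1+v=\bigl((1+u)-(1-u)k\bigr)(1-k)^{-1}$ is invertible, whence $v=u_{[\beta_1]}$ by Part~(1) and $u=v\,u_{[-k]}^{-1}=u_{[\beta_1]}u_{[k]}$. You should also make explicit the small fact you are using here, namely that $1-k$ is invertible whenever $1+k$ is (apply $\ast$ and use $k^\ast=-k$). With these sign fixes and that remark, the proof is complete.
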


When the involution on $G$ is the classical one, it is clear that $G\subset \mathcal{U}n(\mathbf{F}G)$ but in general
$G\not\subset \mathcal{U}n^{C}(\mathbf{F}G)$. Let $x\in G$ be of order $n$, then

\begin{equation}\label{equ-1+x unidad}
(1+x)\left[\dfrac{1}{2} \left(1-x+x^{2} - \cdots + (-1)^{n-1} x^{n-1}\right)\right]=\dfrac{1}{2}\big(1+(-1)^{n-1}\big).
\end{equation}

By Proposition \ref{Propo-cay.sii.inver}, it is clear that $x\in \mathcal{U}n^{C}(\mathbf{F}G)$ if and only if $n$ is
odd. It follows that for a finite group $G$ we have  $G\subset \mathcal{U}n^{C}(\mathbf{F}G)$ if and only if $G$ has odd order.
Moreover, if $x\in G$ has odd order $n>1$, $x=u_{[\beta]}$ for some $\beta\in \mathbf{F}G^{-}$ and $1+x=2(1+\beta)^{-1}$.
By expression \eqref{equ-1+x unidad} we have $1+x=2\lbrack 1-(x-x^{n-1})-(x^3-x^{n-3})-\cdots -(x^{n-2}-x^2)\rbrack^{-1}$.
Therefore, we have the next result, see \cite[Corollary 1]{VieiraRibeiro:06}.

\begin{cor}\label{cor1}
Let $G$ be a finite group, $\mathbf{F}$ a field with $\car(\mathbf{F})\neq 2$ and $\mathbf{F}G$ its group algebra. Then
$G\subset \mathcal{U}n^{C}(\mathbf{F}G)$ if and only if $G$ has odd order. In this case, if $x$ has order $n>1$, $x=u_{[\beta]}$
where $\beta=-(x-x^{n-1})-(x^3-x^{n-3})-\cdots -(x^{n-2}-x^2)$.
\end{cor}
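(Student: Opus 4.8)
The plan is to extract the whole statement from identity \eqref{equ-1+x unidad} together with Proposition~\ref{Propo-cay.sii.inver}(1). First I would observe that under the classical involution every $x\in G$ is a unitary element, since $xx^{\ast}=xx^{-1}=1$; hence by Proposition~\ref{Propo-cay.sii.inver}(1) the condition $x\in\mathcal{U}n^{C}(\mathbf{F}G)$ is equivalent to $1+x$ being invertible in $\mathbf{F}G$, and therefore $G\subset\mathcal{U}n^{C}(\mathbf{F}G)$ is equivalent to the invertibility of $1+x$ for every $x\in G$. So the whole ``if and only if'' reduces to controlling invertibility of $1+x$ in terms of the order of $x$.

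To handle both directions I would split on the parity of the order $n$ of $x$. If $|G|$ is odd, then every $x\in G$ has odd order $n\mid|G|$, so $(-1)^{n-1}=1$, the right-hand side of \eqref{equ-1+x unidad} equals $1$, and $1+x$ has the two-sided inverse $\tfrac12(1-x+x^{2}-\cdots+x^{n-1})$ (for $n=1$ this is trivial, with $\beta=0$); thus $G\subset\mathcal{U}n^{C}(\mathbf{F}G)$. For the converse I would argue contrapositively: if $|G|$ is even, Cauchy's theorem gives $x\in G$ of order $2$ (any element of even order $n$ would do equally well), so $(-1)^{n-1}=-1$, the right-hand side of \eqref{equ-1+x unidad} vanishes, and the element $\gamma:=\tfrac12(1-x+\cdots+(-1)^{n-1}x^{n-1})$, which is nonzero because $1,x,\dots,x^{n-1}$ are distinct group elements, satisfies $(1+x)\gamma=0$. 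Since $\mathbf{F}G$ is finite-dimensional, a one-sided zero-divisor cannot be a unit, so $1+x$ is not invertible and $x\notin\mathcal{U}n^{C}(\mathbf{F}G)$, i.e. $G\not\subset\mathcal{U}n^{C}(\mathbf{F}G)$. This settles the equivalence.

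It remains to pin down $\beta$ when $x$ has odd order $n>1$. From $x=u_{[\beta]}=(1-\beta)(1+\beta)^{-1}$ I would get $1+x=[(1+\beta)+(1-\beta)](1+\beta)^{-1}=2(1+\beta)^{-1}$, hence $\beta=2(1+x)^{-1}-1$; substituting $(1+x)^{-1}=\tfrac12(1-x+x^{2}-\cdots+x^{n-1})$ from \eqref{equ-1+x unidad} gives $\beta=-x+x^{2}-x^{3}+\cdots-x^{n-2}+x^{n-1}$, i.e. the odd powers $x,x^{3},\dots,x^{n-2}$ appear with coefficient $-1$ and the even powers $x^{2},x^{4},\dots,x^{n-1}$ with coefficient $+1$ (here $n-1$ is even). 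Regrouping, this is exactly $\beta=-(x-x^{n-1})-(x^{3}-x^{n-3})-\cdots-(x^{n-2}-x^{2})$. I would then confirm $\beta\in\mathbf{F}G^{-}$ by noting that $\ast$ sends each block $x^{k}-x^{n-k}$ to $x^{n-k}-x^{k}$ (using $2k\neq n$ since $n$ is odd), so $\beta^{\ast}=-\beta$, and I would double-check directly that $1-\beta=2x(1+x)^{-1}$ and $(1+\beta)^{-1}=\tfrac12(1+x)$, whence $u_{[\beta]}=x$.

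I do not expect a genuine obstacle: essentially all the content sits in the paragraph preceding the statement. The only points requiring a little care are invoking finite-dimensionality to upgrade ``zero-divisor'' to ``non-unit'' in the even-order case, and the elementary rearrangement of $-x+x^{2}-\cdots+x^{n-1}$ into the telescoped form, which works precisely because $n$ is odd.
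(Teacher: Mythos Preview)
Your proposal is correct and follows essentially the same route as the paper: the paragraph preceding the corollary already derives the equivalence from \eqref{equ-1+x unidad} and Proposition~\ref{Propo-cay.sii.inver}(1), and extracts $\beta$ from $1+x=2(1+\beta)^{-1}$ exactly as you do. You are simply more explicit at a couple of points the paper leaves tacit (Cauchy's theorem for the even case, finite-dimensionality to pass from zero-divisor to non-unit, and the check that $\beta^{\ast}=-\beta$), but the underlying argument is the same.
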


%%%%%%%%%%%%%%%%%%%%%%%%%%%%%%%%%%%%%%%%%%%%%%%%%%%%%%%%%%%%%%%%%%%%%%%%%%%
\subsection{Cayley unitary elements built out of skew-symmetric elements}

Let $\mathbf{F}$ be a real extension of $\mathbb{Q}$. In \cite{VieiraRibeiro:06}, Vieira and Ribeiro gave a way to find
Cayley unitary elements built out of generator $q(x-x^{-1})$ of $\mathbf{F}G^{-}$, with $q\in \mathbf{F}$ and $x\in G$,
such that $1+q(x-x^{-1})$ is invertible in $\mathbf{F}G$. In fact, they considered, for $0\neq q\in \mathbf{F}$,
the sequence $(G_i)_{i\in \mathbb{N}}$ recursively defined by
\begin{equation}\label{Gi-rec}
G_0=0,  \, \,  G_1 = 1 \quad  \text{ and } \quad  G_i = q^2 G_{i-2} + G_{i-1} \, \, \,  \text{for} \, \,  i\geq 2,
\end{equation}
from which they demonstrated the next result.

\begin{theor}[{\cite[Theorem 1']{VieiraRibeiro:06}}]\label{Teo-inve-alpha}
Let $x\in G$ be an element of order $n>2$ and $q\in \mathbf{F}$, with $\mathbf{F}$ a real extension of $\mathbb{Q}$.
Then the element $1+ q(x-x^{-1})$ is invertible in $\mathbf{F}G$ and its inverse is given by
$a_{0}+a_{1}x+\cdots+a_{n-1}x^{n-1}$, where for $i=0, 1, \ldots, n-1$

\begin{equation}\label{Form-ai-inve-alpha}
  a_{i}=\frac{q^{n-i}G_{i}+(-q)^{i}G_{n-i}}{G_{n+1}+q^{2}G_{n-1}-q^{n}(1+(-1)^{n})} ~\text{ and }~
G_i= \dfrac{1}{2^{i-1}}\sum_{m \text{ odd}} \binom{i}{m}\left(1+4q^2\right)^{\frac{m-1}{2}}.
\end{equation}
\end{theor}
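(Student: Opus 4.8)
The plan is to verify the formula by a direct multiplication inside the commutative subalgebra $\mathbf{F}\langle x\rangle\subseteq\mathbf{F}G$, which we identify with $\mathbf{F}[t]/(t^{n}-1)$ through $x\leftrightarrow t$ (so $x^{-1}\leftrightarrow t^{n-1}$). Abbreviate $N_i:=q^{\,n-i}G_i+(-q)^iG_{n-i}$ and $D:=G_{n+1}+q^{2}G_{n-1}-q^{n}\bigl(1+(-1)^n\bigr)$, so that the asserted inverse is $D^{-1}\sum_{i=0}^{n-1}N_i x^i$. It therefore suffices to prove the congruence
\[
\bigl(1+q(t-t^{\,n-1})\bigr)\Bigl(\textstyle\sum_{i=0}^{n-1}N_i\,t^{i}\Bigr)\;\equiv\;D \pmod{t^{\,n}-1}
\]
and, separately, that $D\neq 0$; the closed form for $G_i$ is then an independent combinatorial step.

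For the congruence I would expand the product and reduce exponents modulo $n$, so that the coefficient of $t^{j}$ becomes $N_j+qN_{j-1}-qN_{j+1}$, the indices being read cyclically (for $1\le j\le n-1$ they lie in $\{0,\dots,n\}$, and the defining formula already satisfies $N_n=G_n=N_0$, consistently with the cyclic reduction). Substituting the definition of $N_i$ and collecting the contributions of $q^{\,n-i}G_i$ and of $(-q)^iG_{n-i}$ separately (after the short sign manipulations $q(-q)^{j-1}=-(-q)^j$ and $-q(-q)^{j+1}=q^{2}(-q)^j$), this coefficient factors as
\(
q^{\,n-j}\bigl(G_j+q^{2}G_{j-1}-G_{j+1}\bigr)+(-q)^{j}\bigl(G_{n-j}-G_{n-j+1}+q^{2}G_{n-j-1}\bigr),
\)
and each bracket is $0$ by the defining recurrence $G_{i+1}=G_i+q^{2}G_{i-1}$ (used at $i=j$ and at $i=n-j$, both $\ge 1$ for $1\le j\le n-1$). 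Hence the coefficient of $t^{j}$ vanishes for $j=1,\dots,n-1$. The coefficient of $t^{0}$, namely $N_0+qN_{n-1}-qN_1$, must be handled by hand, because here the cyclic index does \emph{not} agree with the formula extended backwards; using $G_0=0$ and $G_1=1$ one gets $N_0+qN_{n-1}-qN_1=G_n+2q^{2}G_{n-1}-q^{n}(1+(-1)^n)$, and one final application of $G_{n+1}=G_n+q^{2}G_{n-1}$ turns this into exactly $D$. This proves the congruence, i.e.\ $\bigl(1+q(x-x^{-1})\bigr)\bigl(\sum_i N_i x^i\bigr)=D$ in $\mathbf{F}G$.

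It remains to check $D\neq 0$ — the only place where the hypothesis that $\mathbf{F}$ is real enters — for then multiplying the last identity by $D^{-1}$ exhibits $1+q(x-x^{-1})$ as a unit with inverse $\sum a_i x^i$, $a_i=N_i/D$. Since $1+4q^{2}>0$, the characteristic polynomial $\lambda^{2}-\lambda-q^{2}$ of the recurrence has two distinct real roots $\lambda_{\pm}=\tfrac12\bigl(1\pm\sqrt{1+4q^{2}}\bigr)$, so $G_i=(\lambda_+^{\,i}-\lambda_-^{\,i})/(\lambda_+-\lambda_-)$; using $\lambda_+\lambda_-=-q^{2}$ and $\lambda_+-\lambda_-=\sqrt{1+4q^{2}}$ one computes $G_{n+1}+q^{2}G_{n-1}=\lambda_+^{\,n}+\lambda_-^{\,n}$, whence $D=\lambda_+^{\,n}+\lambda_-^{\,n}-q^{n}(1+(-1)^n)$. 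For $q=0$ this equals $1$. For $q\neq0$ one has $\lambda_+>1>0>\lambda_-$ and $\lambda_+-|\lambda_-|=1$, so $\lambda_+^{\,n}>|\lambda_-^{\,n}|$; when $n$ is odd $D=\lambda_+^{\,n}+\lambda_-^{\,n}>0$, and when $n$ is even $q^{n}=\sqrt{\lambda_+^{\,n}\lambda_-^{\,n}}$ with both factors positive, so $D=\bigl(\sqrt{\lambda_+^{\,n}}-\sqrt{\lambda_-^{\,n}}\bigr)^{2}$, which is positive because $\lambda_+^{\,n}=\lambda_-^{\,n}$ would force $\lambda_+=\pm\lambda_-$, hence (the roots being distinct) $\lambda_+=-\lambda_-$, contradicting $\lambda_++\lambda_-=1$. (Alternatively, $1+q(x-x^{-1})$ is already a unit in $\mathbb{C}G\supseteq\mathbf{F}G$, being $1$ plus a skew element for the positive canonical involution of $\mathbb{C}G$; this together with $N_0=G_n\neq0$ again forces $D\neq0$.)

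Finally, the closed form for $G_i$ drops out by expanding $\lambda_{\pm}^{\,i}=2^{-i}\bigl(1\pm\sqrt{1+4q^{2}}\bigr)^{i}$ by the binomial theorem in $G_i=(\lambda_+^{\,i}-\lambda_-^{\,i})/\sqrt{1+4q^{2}}$: the even-index terms cancel, the odd-index terms double, the common factor $\sqrt{1+4q^{2}}$ cancels, and one is left with $G_i=2^{-(i-1)}\sum_{m\ \mathrm{odd}}\binom{i}{m}(1+4q^{2})^{(m-1)/2}$, which in particular shows $G_i\in\mathbf{F}$. I expect the main obstacles to be the cyclic bookkeeping in the coefficient computation — isolating the exceptional $t^{0}$-coefficient and recognizing its value as $D$ — and the parity/sign analysis establishing $D\neq0$; everything else is routine manipulation of the two-term recurrence.
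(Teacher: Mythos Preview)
Your proof is correct. The paper does not give its own proof of this statement: it is quoted verbatim from \cite{VieiraRibeiro:06} as background, so there is nothing to compare your argument against here. For what it is worth, your verification is clean --- the cyclic coefficient computation, the identification of the $t^0$-coefficient with $D$ via $G_{n+1}=G_n+q^2G_{n-1}$, the positivity argument for $D$ using $\lambda_\pm$, and the binomial derivation of the closed form for $G_i$ all check out. One cosmetic remark: in the even-$n$ case you could shorten the last step by observing directly that $\lambda_+^n\neq\lambda_-^n$ follows from $\lambda_+>|\lambda_-|>0$, without the detour through $\lambda_+=-\lambda_-$.
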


Note that if $q=1$, then $G_i=F_i$,  the $i$-th term of the well-known Fibonacci sequence. Under these conditions, it follows the next result.

\begin{cor}[{\cite[Theorem 1]{VieiraRibeiro:06}}]\label{Coef-Fibo}
Let $x\in G$ be an element of order $n>2$ and $\mathbf{F}$ is a field of characteristic zero. Then $1+x-x^{-1}$ is
invertible in $\mathbf{F}G$ and its inverse is given by $a_{0}+a_{1}x+\cdots+a_{n-1}x^{n-1}$, where
$$
a_{i}=\frac{F_{i}+(-1)^{i}F_{n-i}}{F_{n+1}+F_{n-1}-(1+(-1)^{n})}, \quad \text{ for } \, \, \,  i=0, 1, \ldots, n-1.
$$
\end{cor}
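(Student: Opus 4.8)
The plan is to deduce Corollary~\ref{Coef-Fibo} as the specialization $q=1$ of Theorem~\ref{Teo-inve-alpha}; the only genuine point requiring attention is the passage from a real extension of $\mathbb{Q}$ (as in the theorem) to an arbitrary field of characteristic zero (as in the corollary), which I will handle by a rationality argument. First I would record that with $q=1$ the recursion~\eqref{Gi-rec} reads $G_0=0$, $G_1=1$, $G_i=G_{i-2}+G_{i-1}$ for $i\geq 2$, which is exactly the defining recursion of the Fibonacci sequence, so $G_i=F_i$ for every $i\geq 0$. Equivalently, putting $q=1$ (hence $1+4q^2=5$) in the closed form of~\eqref{Form-ai-inve-alpha} gives $G_i=\frac{1}{2^{i-1}}\sum_{m\text{ odd}}\binom{i}{m}5^{(m-1)/2}$, which is the classical binomial identity for $F_i$. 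In particular all the $G_i$, and hence the resulting $a_i$, are rational numbers.

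Next I would substitute $q=1$ into the formula for $a_i$ in~\eqref{Form-ai-inve-alpha}. The numerator $q^{n-i}G_i+(-q)^iG_{n-i}$ collapses to $F_i+(-1)^iF_{n-i}$, and the denominator $G_{n+1}+q^2G_{n-1}-q^n\big(1+(-1)^n\big)$ collapses to $F_{n+1}+F_{n-1}-\big(1+(-1)^n\big)$, which is precisely the expression in the statement. It remains to check that this denominator is a nonzero integer, so that the $a_i$ are well-defined rationals and the formula is meaningful over $\mathbf{F}$: since $F_{n+1}+F_{n-1}=L_n$ is the $n$-th Lucas number and $1+(-1)^n\in\{0,2\}$, for $n>2$ one has $F_{n+1}+F_{n-1}-\big(1+(-1)^n\big)\geq L_3-2=2>0$, hence it is a positive integer and therefore invertible in any field of characteristic zero.

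Finally I would conclude the invertibility statement over an arbitrary characteristic-zero field $\mathbf{F}$ as follows. Applying Theorem~\ref{Teo-inve-alpha} with $q=1$ to the field $\mathbb{Q}$ (which is a real extension of itself) yields $(1+x-x^{-1})\big(\sum_{i=0}^{n-1}a_ix^i\big)=1$ in $\mathbb{Q}G$, an identity both of whose sides are $\mathbb{Z}$-linear combinations of the group elements with the displayed rational coefficients $a_i$. Since $\mathbb{Q}\subseteq\mathbf{F}$, the same identity holds in $\mathbf{F}G$ after extending scalars; by symmetry $\big(\sum_{i=0}^{n-1}a_ix^i\big)(1+x-x^{-1})=1$ as well, so $1+x-x^{-1}$ is invertible in $\mathbf{F}G$ with the asserted inverse. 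I do not expect any real obstacle here: the substantive content is entirely contained in Theorem~\ref{Teo-inve-alpha}, and the only step needing care is the observation that one should not invoke that theorem directly over $\mathbf{F}$ (which need not be real), but rather over $\mathbb{Q}$ and then extend scalars, together with the elementary verification that the Lucas-type denominator never vanishes for $n>2$.
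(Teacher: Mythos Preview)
Your proposal is correct and follows exactly the approach indicated in the paper: the corollary is obtained from Theorem~\ref{Teo-inve-alpha} by the specialization $q=1$, whence $G_i=F_i$ and the displayed formula for the $a_i$ drops out. In fact you are more careful than the paper itself, which does not explicitly address the passage from a real extension of $\mathbb{Q}$ to an arbitrary characteristic-zero field nor the nonvanishing of the denominator; your rationality/scalar-extension argument and the Lucas-number check fill these small gaps cleanly.
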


The previous two results show that $1+q(x-x^{-1})$ is always invertible when $q\in \mathbf{F}$, with $\mathbf{F}$
a real extension of $\mathbb{Q}$ and therefore we can find the Cayley unitary element built out of the skew-symmetric
element $q(x-x^{-1})$. The next theorem shows how to do it.

\begin{theor}[{\cite[Theorem 2]{VieiraRibeiro:06}}]\label{Thm-Cayl-alpha}
Let $x\in G$ be an element of order $n>2$, $q\in \mathbf{F}$ and $\beta=q(x-x^{-1})$, where $\mathbf{F}$ is a real
extension of $\mathbb{Q}$. Then the Cayley unitary element $u_{[\beta]}$ is given by $b_{0}+b_{1}x+\cdots+b_{n-1}x^{n-1}$,
where

\begin{equation}
  b_{0}=\dfrac{G_n - 2q^{2}G_{n-1}+q^{n}(1+(-1)^{n})}{G_{n+1}+q^{2}G_{n-1}-q^{n}(1+(-1)^{n})}
  \quad \text{ and } \quad b_{i}=2a_i , \, \, \, \text{ for } \, \, \, i=1, 2, \ldots, n-1,
\end{equation}

with $a_i$'s are as in \eqref{Form-ai-inve-alpha}.
\end{theor}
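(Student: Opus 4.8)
The plan is to reduce the statement to Theorem \ref{Teo-inve-alpha} by means of the elementary identity
$$u_{[\beta]} = (1-\beta)(1+\beta)^{-1} = 2(1+\beta)^{-1} - 1,$$
valid in any ring in which $1+\beta$ is a unit, since $1-\beta = 2\cdot 1 - (1+\beta)$ gives $(1-\beta)(1+\beta)^{-1} = 2(1+\beta)^{-1} - (1+\beta)(1+\beta)^{-1}$; no commutativity is needed. Thus once we know the coefficients of $(1+\beta)^{-1}$ we know those of $u_{[\beta]}$.

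First I would invoke Theorem \ref{Teo-inve-alpha} (applicable because $n>2$ and $\mathbf{F}$ is a real extension of $\mathbb{Q}$) to write $(1+\beta)^{-1} = a_0 + a_1 x + \cdots + a_{n-1}x^{n-1}$ with the $a_i$ as in \eqref{Form-ai-inve-alpha}, and substitute into the identity above to obtain
$$u_{[\beta]} = (2a_0 - 1) + \sum_{i=1}^{n-1} 2a_i\, x^i.$$
Since $x$ has order $n$, the elements $1, x, \ldots, x^{n-1}$ are $\mathbf{F}$-linearly independent, so this is the unique expression of $u_{[\beta]}$ of the asserted shape; reading off coefficients yields $b_i = 2a_i$ for $i = 1, \ldots, n-1$ at once, and leaves only the verification that $2a_0 - 1$ equals the stated closed form for $b_0$.

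For that last point, use $G_0 = 0$ in \eqref{Form-ai-inve-alpha} to get $a_0 = G_n/D$, where $D = G_{n+1} + q^2 G_{n-1} - q^n(1+(-1)^n)$ is the common denominator there. Then
$$b_0 = 2a_0 - 1 = \frac{2G_n - G_{n+1} - q^2 G_{n-1} + q^n(1+(-1)^n)}{D},$$
and comparing this numerator with $G_n - 2q^2 G_{n-1} + q^n(1+(-1)^n)$ shows that the desired equality is equivalent to $G_{n+1} = G_n + q^2 G_{n-1}$, which is precisely the recurrence \eqref{Gi-rec} at index $n+1$ (admissible since $n+1 \ge 4$). This finishes the argument.

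I do not anticipate a real obstacle: the whole proof is bookkeeping once the identity $u_{[\beta]} = 2(1+\beta)^{-1} - 1$ is noted and Theorem \ref{Teo-inve-alpha} is quoted. The only places calling for a touch of care are checking that the recurrence for $(G_i)_{i\in\mathbb{N}}$ is used at an admissible index, and confirming that the $n>2$ hypothesis is what licenses both the inverse formula and the uniqueness of the coefficient expansion.
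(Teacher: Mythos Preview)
Your proof is correct. Note, however, that the paper does not actually prove this theorem: it is quoted from \cite{VieiraRibeiro:06} and stated without proof. The closest analogue proved in the paper is Theorem~\ref{THM-Cayleyuni-si-o(x)=?}, whose proof also reduces to reading off coefficients of $(1-\beta)(1+\beta)^{-1}$ once the inverse is known. There the authors expand the product $(1-\beta)(a_0+a_1x+\cdots+a_{n-1}x^{n-1})$ directly and simplify each coefficient using the linear relations \eqref{eq:casos}, arriving at $b_0=2a_0-1$ and $b_k=2a_k$.

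Your route via the identity $u_{[\beta]}=2(1+\beta)^{-1}-1$ reaches the same conclusion $b_0=2a_0-1$, $b_i=2a_i$ in one line without touching the individual coefficient relations, and then handles the closed form for $b_0$ by a single appeal to the recurrence \eqref{Gi-rec}. This is marginally cleaner than the direct expansion in the paper's analogous proof, but the two approaches are essentially the same computation organized differently. One small remark: the recurrence $G_{n+1}=q^2G_{n-1}+G_n$ only needs $n+1\ge 2$, so your parenthetical ``admissible since $n+1\ge 4$'' is more than sufficient.
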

%%%%%%%%%%%%%%%%%%%%%%%%%%%%%%%%%%%%%%%%%%%%%%%%%%%%%%%%%%%%%%%%%%%%%%%%%%%
\section{Cayley Unitary Elements and oriented classical involution}
%%%%%%%%%%%%%%%%%%%%%%%%%%%%%%%%%%%%%%%%%%%%%%%%%%%%%%%%%%%%%%%%%%%%%%%%%%%

Let $\mathbf{F}$ be a field with $\car(\mathbf{F})=0$. If we consider on $\mathbf{F}G$ the {\it oriented group involution}
$\sum_{g\in G} \alpha_gg \stackrel{\circledast}\mapsto \sum_{g\in G} \alpha_g\sigma(g)g^{\ast}$, where $G$ is a finite group
with a non-trivial homomorphism $\sigma:G\rightarrow \{\pm1\}$ and an involution $\ast$ and if $N=ker(\sigma)=\{g\in G : \sigma(g)=1\}$
denotes the kernel of $\sigma$, then $N$ is a subgroup in $G$ of index $2$. It is clear that the involution $\circledast$ coincides
on the subalgebra $\mathbf{F}N$ with the group involution $\ast$. Since $\sigma(x)=1$, i.e., $x\in N$, if $x$ has odd order $n>1$,
then by Corollary \ref{cor1} if the involution on $\mathbf{F}G$ is the oriented classical one, it follows that $x=u_{[\beta]}$ where
$\beta=-(x-x^{n-1})-(x^3-x^{n-3})-\cdots -(x^{n-2}-x^2)$. Now, by \eqref{eq01} the set $\mathbf{F}G^{-}$ of skew-symmetric elements
of $\mathbf{F}G$ is generated as an $\mathbf{F}$-module by $\mathcal{L}=\mathcal{L}_1\cup \mathcal{L}_2 \cup \mathcal{L}_3$,
where $\mathcal{L}_1=\{g-g^{-1} : g^2 \neq 1, g\in N \}$, $\mathcal{L}_2=\{g: g^2 =1, g\not\in N\}$, and
$\mathcal{L}_3=\{g+g^{-1} : g^2 \neq 1, g\not\in N \}$.

\begin{rem}\label{construct_cayley}
Notice that to construct Cayley unitary elements from skew-symmetric elements we can consider the last ones as follows. Firstly, for elements in
the $\mathbf{F}$-submodule generated by $\mathcal{L}_1$, the construction is as in the previous section. Secondly, note that if $x\in \mathcal{L}_2$ and
$q\in \mathbf{F}\setminus\{\pm 1\}$ then the inverse of $1+qx$ is equal to $\frac{1}{1-q^2}-\frac{q}{1-q^2}x$ and therefore
$u_{[qx]}=\frac{1+q^2}{1-q^2}-\frac{2q}{1-q^2}x$.
\end{rem}
In the sequel, we study this problem for skew-symmetric elements in $\mathcal{L}_3$. Hence, we are going to consider skew-symmetric elements $\beta=x+x^{-1}$
such that $x\in G$ is an element of even order $n\geq 4$ and $x\not\in N$. Our goal in the next result is to obtain a closed expression for the inverse of
$1+(x+x^{-1})$.

\begin{prop}\label{reg-ai}
Let $\mathbf{F}$ be a field with $\car(\mathbf{F})=0$, $G$ a group with a non-trivial orientation $\sigma$ and $\mathbf{F}G$ its group
algebra endowed with the oriented classical involution. If $x\in G$ is an element of even order $n\geq 4$ with $\sigma(x)=-1$, then
$1+(x+x^{-1})$ is invertible in $\mathbf{F}G$ if and only if $n\equiv 2,4 \pmod{6}$. In these cases, the inverse of $1+(x+x^{-1})$ is
given by $a_0 + a_1 x + a_2x^2 + \cdots + a_{n-1}x^{n-1}$, where
\begin{enumerate}
\item if $n\equiv 2\pmod{6}$, then for $0\leq k \leq n-1$
\[a_k=
\begin{cases}
-\frac{1}{3}, & \text{if $k\equiv 0,2\;(\bmod\;{3})$,}\\
\frac{2}{3}, & \text{if $k\equiv 1\;(\bmod\;{3})$.}
\end{cases}
\]

\item if $n\equiv 4\pmod{6}$, then for $0\leq k \leq n-1$
\[a_k=
\begin{cases}
\frac{1}{3}, & \text{if $k\equiv 0,1\;(\bmod\;{3})$,}\\
-\frac{2}{3}, & \text{if $k\equiv 2\;(\bmod\;{3})$.}
\end{cases}
\]
 \end{enumerate}
\end{prop}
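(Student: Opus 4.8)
The plan is to reduce everything to the commutative subalgebra $A=\mathbf{F}\langle x\rangle$. Since $x$ has order exactly $n$, the map $\mathbf{F}[t]/(t^{n}-1)\to A$ sending $t\mapsto x$ is an isomorphism, and $\beta=x+x^{-1}$ (hence $1+\beta$) lies in $A$. I would first observe that $1+\beta$ is invertible in $\mathbf{F}G$ if and only if it is invertible in $A$: one implication is trivial, and for the other, a zero divisor of the finite-dimensional algebra $A$ remains a zero divisor in $\mathbf{F}G$, while a non-zero-divisor of a finite-dimensional commutative algebra is a unit; consequently the inverse, when it exists, already lies in $A$, so it may be written as $a_{0}+a_{1}x+\dots+a_{n-1}x^{n-1}$. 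Finally, multiplying by the unit $x$, the equation $(1+\beta)(a_{0}+\dots+a_{n-1}x^{n-1})=1$ is equivalent to $(1+x+x^{2})\sum_{k}a_{k}x^{k}=x$ in $A$.

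Next I would turn this last identity into a cyclic linear recurrence. Expanding the left-hand side and collecting the coefficient of $x^{j}$ (all indices read modulo $n$) gives the system $a_{j}+a_{j-1}+a_{j-2}=\delta_{j,1}$ for $j=0,1,\dots,n-1$. Thus $1+\beta$ is invertible precisely when this system is solvable, and the solution, if unique, is the coefficient vector of the inverse. The equations for $j=2,3,\dots,n-1$ involve no wrap-around and read $a_{j}=-a_{j-1}-a_{j-2}$; iterating (which one may also see from $x^{2}+x+1\mid x^{3}-1$, so the characteristic roots are cube roots of unity) forces $a_{k}$ to depend only on $k\bmod 3$, with $a_{2}=-a_{0}-a_{1}$ and $a_{3}=a_{0}$, $a_{4}=a_{1}$, and so on. So everything is pinned down by $a_{0}$ and $a_{1}$, and it remains to impose the two wrap-around equations coming from $j=0$ and $j=1$, namely $a_{0}+a_{n-1}+a_{n-2}=0$ and $a_{1}+a_{0}+a_{n-1}=1$.

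Now I would split according to $n\bmod 3$ (equivalently, since $n$ is even, according to $n\bmod 6$), substituting $a_{n-1}=a_{(n-1)\bmod 3}$ and $a_{n-2}=a_{(n-2)\bmod 3}$ with the convention $a_{2}=-a_{0}-a_{1}$. When $n\equiv 0$, the $j=0$ equation becomes the tautology $a_{0}+a_{1}+a_{2}=0$ while the $j=1$ equation becomes $a_{0}+a_{1}+a_{2}=1$, i.e.\ $0=1$: no solution, so $1+\beta$ is not invertible. When $n\equiv 2\pmod 3$ (that is $n\equiv 2\pmod 6$), the two equations become $2a_{0}+a_{1}=0$ and $a_{0}+2a_{1}=1$, giving $a_{0}=a_{2}=-\tfrac13$ and $a_{1}=\tfrac23$, which is case (1). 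When $n\equiv 1\pmod 3$ (that is $n\equiv 4\pmod 6$), they become $a_{0}=a_{1}$ and $2a_{0}+a_{1}=1$, giving $a_{0}=a_{1}=\tfrac13$ and $a_{2}=-\tfrac23$, which is case (2). Since $n\geq 4$, the block $j=2,\dots,n-1$ is non-empty and the above genuinely determines the whole periodic pattern, so this finishes the proof.

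As for difficulty, there is no deep obstacle; the argument is essentially a determinacy statement for a constant-coefficient recurrence. The one place that needs care is the index bookkeeping at $j=0$ and $j=1$, where the wrap-around terms $a_{n-1},a_{n-2}$ enter and their reduction modulo $3$ is exactly what makes the three residue classes of $n$ behave differently — in particular making the system inconsistent precisely when $3\mid n$. I would also spell out at the outset why it suffices to work inside $\mathbf{F}\langle x\rangle$, since that reduction is the step that looks obvious but genuinely needs the one-line zero-divisor justification.
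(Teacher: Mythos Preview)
Your argument is correct and follows essentially the same route as the paper: both set up the cyclic linear system $a_{j}+a_{j-1}+a_{j-2}=\delta_{j,1}$ (indices modulo $n$), extract the period-$3$ structure from the homogeneous equations for $j\geq 2$, and then split on $n\bmod 3$ to either solve the two wrap-around equations or reach the contradiction $0=1$. Your version is in fact slightly more careful in one respect: you explicitly justify why invertibility in $\mathbf{F}G$ is equivalent to invertibility in $\mathbf{F}\langle x\rangle$, which is exactly what is needed to conclude genuine non-invertibility (rather than merely ``no inverse of the form $\sum a_{k}x^{k}$'') in the case $3\mid n$, a point the paper's proof leaves implicit.
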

\begin{proof}
To prove that $1+(x+x^{-1})$ is invertible it is enough to verify that
$$
\left(1+\left(x+x^{-1}\right)\right)\left(a_0 + a_1 x + a_2x^2 + \cdots + a_{n-1}x^{n-1}\right)=1.
$$
So
\begin{equation}\label{eq:casos}\begin{cases}
a_{n-1} + a_{0} + a_{1}&=1, \\
a_{k-2} + a_{k-1} + a_{k}&=0,\\
\end{cases}
\end{equation}
 for $2\leq k\leq n$, where $a_t=a_j$ provided that $t\equiv j \pmod{n}$.
Then
\begin{equation}\label{eq:casos2}\begin{cases}
a_0=-(a_{n-2}+a_{n-1}),\\
a_1 =1+a_{n-2}, \\
\end{cases}
\end{equation}
for $2\leq k \leq n-1$. From the last expression in \eqref{eq:casos} we have that  $a_j=a_{j+3}$ for all $j\in \mathbb{N}$ and thus if $m\equiv j \pmod{3}$, we obtain that $a_m=a_j$. As $n-1=(n-4)+3$, then $a_{n-4}=a_{n-1}$ and also $a_{n-1}=1+a_2$.

Now, if $n=6t$ (with $t\in \mathbb{N}$), then $a_{n-4}=a_{3(2t-2)+2}=a_2$. It follows that $a_2=1+a_2$, a contradiction. Thus in this case $1+(x+x^{-1})$ is not invertible.

Secondly, if $n=6t+2$ (with $t\in \mathbb{N})$, then
\begin{align} \label{eq:casos3}
\begin{split}
a_{n-2}=a_{6t}=a_0~\text{ and }~a_{n-1}=a_{6t+1}=a_1.
\end{split}
\end{align}
By \eqref{eq:casos2} and \eqref{eq:casos3}, we obtain that $a_0=-\frac{1}{3}$ and $a_1=\frac{2}{3}$.

Finally, we assume that $n=6t+4$ (with $t\in \mathbb{N})$. Also note that making $k=n-1$ and $k=n$, in the last expression
of \eqref{eq:casos}, we get that $a_{n-3}=a_0$. Then $a_{n-1}=a_{6t+3}=a_0$, $a_{n-3}=a_{6t+1}=a_{1}$. So from \eqref{eq:casos},
we conclude that $a_0=a_1=\frac{1}{3}$. Notice that, in both cases the  expression \eqref{eq:casos} and as has been proven above
$a_m=a_j$ for $m\equiv j \pmod{3}$, we can find the values of $a_k$, for each $2\leq k\leq n-1$.
\end{proof}

The previous result allow us to determine the value of the independent term and the coefficient of the
linear term in the inverse of $1+(x+x^{-1})$, when it exists. From the linear recurrence equation
$a_k=-a_{k-2}-a_{k-1}$ for the sequence $(a_k)_{k\geq 2}$ and $a_0,a_1$ given depending of if $\mathfrak{o}(x)\equiv 2,4\pmod 6$. In the sequel, with techniques of generating functions we will establish a closed form to calculate the terms of the sequence $(a_k)_{k\in \mathbb{N}}$.

By Theorem 5.3.1 in \cite{goodaire2002discrete} we have that the general term of the sequence $(a_k)_{k\in \mathbb{N}}$
is given by
$$
a_k=c_1\left(\dfrac{-1+\sqrt{3}i}{2}\right)^k+c_2\left(\dfrac{-1-\sqrt{3}i}{2}\right)^k,
$$
where $\lambda_1=\dfrac{-1+\sqrt{3}i}{2}$ and $\lambda_2=\dfrac{-1-\sqrt{3}i}{2}$ are the roots of the characteristic polynomial $x^2+x+1$.
Then we have the linear system

\begin{equation}\label{sistema_eq}
\systeme{c_1+c_2=a_0,\lambda_1c_1+\lambda_2c_2=a_1}
\end{equation}

Thus if $\mathfrak{o}(x)\equiv 2\pmod 6$, we obtain that $c_1=\frac{-1-\sqrt{3}i}{6}$ and $c_2=\frac{-1+\sqrt{3}i}{6}$. Then for $k\geq 2$, we have that
\begin{equation}\label{eq:an_general1}
a_k=\dfrac{(-1)^{k-1}}{3(2^{k-1})}\left[ (1-\sqrt{3}i)^{k-1}+(1+\sqrt{3}i)^{k-1}\right].
\end{equation}

Similarly, if $\mathfrak{o}(x)\equiv 4\pmod 6$, we solve the linear system \eqref{sistema_eq} and obtain that $c_1=\frac{1-\sqrt{3}i}{6} $ and $c_2=\frac{1+\sqrt{3}i}{6}$ and then for $k\geq 2$
\begin{equation}\label{eq:an_general2}
a_k=\dfrac{(-1)^{k}}{3(2^{k+1})}\left[ (1-\sqrt{3}i)^{k+1}+(1+\sqrt{3}i)^{k+1}\right].
\end{equation}

The proof of the next result follows immediately from Proposition \ref{reg-ai} and the expressions  \eqref{eq:an_general1} and \eqref{eq:an_general2}.

\begin{theor}\label{THM-inver-si-o(x)=?}
Let $\mathbf{F}$ be a field with $\car(\mathbf{F})=0$, $G$ a group with a non-trivial orientation $\sigma$ and $\mathbf{F}G$ its group algebra endowed
with the oriented classical involution. Let $x\in G$ such that $\sigma (x)=-1$. The following statements are true:
\begin{enumerate}
\item If $\mathfrak{o}(x)=6t$, the element $(1+(x+x^{-1}))$ is not invertible in $\mathbf{F}G$.

\item If $\mathfrak{o}(x)=6t+2$, the element $(1+(x+x^{-1}))$ is invertible in $\mathbf{F}G$ and its
      inverse is in the way $a_0 + a_1 x + a_2x^2 + \cdots + a_{6t+1}x^{6t+1}$, where
      $a_0=-\frac{1}{3}$, $a_1=\frac{2}{3}$ and for all $k\geq 2$
      $$
      a_k=\dfrac{(-1)^{k-1}}{3(2^{k-1})}\left[ (1-\sqrt{3}i)^{k-1}+(1+\sqrt{3}i)^{k-1}\right].
      $$

\item If $\mathfrak{o}(x)=6t+4$, the element $\left(1+\left(x+x^{-1}\right)\right)$
      is invertible in $\mathbf{F}G$ and its inverse is in the way  $a_0 + a_1 x + a_2x^2 + \cdots + a_{6t+3}x^{6t+3}$,
      where $a_0=a_1 =\frac{1}{3}$ and for all $k\geq 2$
      $$
      a_k=\dfrac{(-1)^{k}}{3(2^{k+1})}\left[ (1-\sqrt{3}i)^{k+1}+(1+\sqrt{3}i)^{k+1}\right].
      $$
\end{enumerate}
\end{theor}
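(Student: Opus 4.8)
The plan is to read the statement off from Proposition~\ref{reg-ai} together with the closed forms for the coefficient sequence $(a_k)_{k\in\mathbb{N}}$ recorded in \eqref{eq:an_general1} and \eqref{eq:an_general2}, so that there is essentially nothing left to prove beyond repackaging. Part~(1) is precisely the ``only if'' half already established in Proposition~\ref{reg-ai}: when $\mathfrak{o}(x)=6t$ the equations \eqref{eq:casos} (the recurrence together with the cyclic identity $a_{n-1}+a_0+a_1=1$) are inconsistent, forcing a relation of the form $a=1+a$; since $1+(x+x^{-1})\in\mathbf{F}\langle x\rangle$, a finite-dimensional commutative subalgebra, and an element of such a subalgebra is a unit of $\mathbf{F}G$ iff it is a unit there, it follows that $1+(x+x^{-1})$ is not invertible in $\mathbf{F}G$.

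For parts~(2) and~(3) I would first isolate what Proposition~\ref{reg-ai} gives when $\mathfrak{o}(x)=n\equiv 2$ or $4\pmod 6$: $1+(x+x^{-1})$ is a unit, its inverse has the form $\sum_{k=0}^{n-1}a_kx^k$, the coefficients satisfy the linear recurrence $a_k=-a_{k-1}-a_{k-2}$ for $k\ge 2$, and the initial data is $a_0=-\tfrac13,\ a_1=\tfrac23$ when $n\equiv 2\pmod 6$, respectively $a_0=a_1=\tfrac13$ when $n\equiv 4\pmod 6$. It then only remains to solve the recurrence. Its characteristic polynomial $x^2+x+1$ has the primitive cube roots of unity $\lambda_1=\tfrac{-1+\sqrt{3}i}{2}$, $\lambda_2=\tfrac{-1-\sqrt{3}i}{2}$ as roots, so $a_k=c_1\lambda_1^k+c_2\lambda_2^k$ with $c_1,c_2$ determined by the linear system \eqref{sistema_eq} (Theorem~5.3.1 of \cite{goodaire2002discrete}); solving \eqref{sistema_eq} for the two sets of initial data yields $c_1=\tfrac{-1-\sqrt{3}i}{6},\ c_2=\tfrac{-1+\sqrt{3}i}{6}$ when $n\equiv 2\pmod 6$ and $c_1=\tfrac{1-\sqrt{3}i}{6},\ c_2=\tfrac{1+\sqrt{3}i}{6}$ when $n\equiv 4\pmod 6$.

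The remaining step is a direct simplification. Writing $\lambda_1=-\tfrac12(1-\sqrt{3}i)$ and $\lambda_2=-\tfrac12(1+\sqrt{3}i)$, substituting into $a_k=c_1\lambda_1^k+c_2\lambda_2^k$, and using $(1-\sqrt{3}i)(1+\sqrt{3}i)=4$ to absorb one binomial factor into each coefficient, one obtains $a_k=\tfrac{(-1)^{k-1}}{3\cdot 2^{k-1}}\bigl[(1-\sqrt{3}i)^{k-1}+(1+\sqrt{3}i)^{k-1}\bigr]$ when $n\equiv 2\pmod 6$ and $a_k=\tfrac{(-1)^{k}}{3\cdot 2^{k+1}}\bigl[(1-\sqrt{3}i)^{k+1}+(1+\sqrt{3}i)^{k+1}\bigr]$ when $n\equiv 4\pmod 6$, which are exactly \eqref{eq:an_general1} and \eqref{eq:an_general2}; this finishes the proof. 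I do not anticipate a genuine obstacle here, since all the structural work is already in Proposition~\ref{reg-ai}; the only place demanding care is the bookkeeping of the signs $(-1)^{k\pm 1}$ and the powers of $2$ when collapsing the complex-exponential expression into the symmetric real form, and as a sanity check one may evaluate the closed forms at $k=2$ and compare with the residue pattern of $a_k$ in Proposition~\ref{reg-ai}, noting that $\lambda_1^3=\lambda_2^3=1$ forces $(a_k)$ to be $3$-periodic, in accordance with $a_m=a_j$ for $m\equiv j\pmod 3$. (If one allows $t=0$ in part~(2), the order-$2$ case falls outside the hypothesis $n\ge 4$ of Proposition~\ref{reg-ai} but is trivial: there $1+(x+x^{-1})=1+2x$ has inverse $-\tfrac13+\tfrac23 x$.)
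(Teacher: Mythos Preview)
Your proposal is correct and follows essentially the same route as the paper: the paper's own proof is the single line ``follows immediately from Proposition~\ref{reg-ai} and the expressions \eqref{eq:an_general1} and \eqref{eq:an_general2},'' and you have simply unpacked that, deriving the closed forms by solving the recurrence $a_k=-a_{k-1}-a_{k-2}$ via its characteristic roots exactly as the paper does in the paragraph preceding the theorem. Your explicit remark that non-invertibility in $\mathbf{F}\langle x\rangle$ forces non-invertibility in $\mathbf{F}G$ (by finite-dimensionality of the subalgebra) fills a small gap that the paper's Proposition~\ref{reg-ai} leaves implicit, and your treatment of the degenerate case $t=0$ in part~(2) is extra but harmless.
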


%%%%%%%%%%%%%%%%%%%%%%%%%%%%%%%%%%%%%%%%%%%%%%%%%%%%%%%%%%%%%%%%%%%%%%%%%%%
\section{Cayley unitary elements constructed from skew-symmetric elements}
%%%%%%%%%%%%%%%%%%%%%%%%%%%%%%%%%%%%%%%%%%%%%%%%%%%%%%%%%%%%%%%%%%%%%%%%%%%

In the previous section, we showed how to get $\left(1+\left(x+x^{-1}\right)\right)^{-1}$, when it exists.
This fact allow us to construct Cayley unitary elements from skew-symmetric elements of the type $x+x^{-1}$.

\begin{theor}\label{THM-Cayleyuni-si-o(x)=?}
Let $\mathbf{F}$ be a field with $\car(\mathbf{F})=0$, $G$ a group with a non-trivial orientation $\sigma$ and $\mathbf{F}G$
its group algebra endowed with the oriented classical involution. Let $x\in G$ with $n=\mathfrak{o}(x)\equiv 2,4 \pmod{6}$  and
such that $\sigma (x)=-1$. Then the Cayley unitary element $u_{[\beta]}$ obtained from the skew-symmetric element $(x+x^{-1})$
is given by $b_0 + b_1 x + b_2x^2 + \cdots + b_{n-1}x^{n-1}$, where $b_0=2a_0-1$ and $b_k=2a_k$,  for $1\leq k \leq n-1$ and
$a_k$'s are as in Theorem \ref{THM-inver-si-o(x)=?}.
\end{theor}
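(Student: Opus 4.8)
The plan is to compute $u_{[\beta]}=(1-\beta)(1+\beta)^{-1}$ directly, exploiting a trivial algebraic identity that is available precisely because $\beta=x+x^{-1}$.

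First I would record that, since $n=\mathfrak{o}(x)\equiv 2,4\pmod 6$, Theorem \ref{THM-inver-si-o(x)=?} (equivalently Proposition \ref{reg-ai}) guarantees that $1+\beta=1+(x+x^{-1})$ is invertible in $\mathbf{F}G$, with the explicit inverse $(1+\beta)^{-1}=a_0+a_1x+\cdots+a_{n-1}x^{n-1}$ and the $a_k$'s as in that theorem. Hence $u_{[\beta]}=(1-\beta)(1+\beta)^{-1}$ is well defined, and by the general observations recalled in the introduction it lies in $\mathcal{U}n(\mathbf{F}G)$.

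The key step is the observation $1-\beta=1-x-x^{-1}=2\cdot 1-(1+x+x^{-1})=2\cdot 1-(1+\beta)$. Since $2$ is central and $1+\beta$ commutes with its own inverse, multiplying on the right by $(1+\beta)^{-1}$ yields
$$
u_{[\beta]}=(1-\beta)(1+\beta)^{-1}=2(1+\beta)^{-1}-(1+\beta)(1+\beta)^{-1}=2(1+\beta)^{-1}-1 .
$$
This is just the identity $(1-t)(1+t)^{-1}=2(1+t)^{-1}-1$ specialized at $t=\beta$, valid in any unital ring in which $1+t$ is a unit. Substituting the expression for $(1+\beta)^{-1}$ from Theorem \ref{THM-inver-si-o(x)=?} and collecting coefficients in the $\mathbf{F}$-linearly independent set $\{1,x,\ldots,x^{n-1}\}$ (independent because $x$ has order $n\geq 4$), the scalar $-1$ affects only the coefficient of $x^0$. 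Thus the constant term of $u_{[\beta]}$ is $2a_0-1$ and the coefficient of $x^k$ is $2a_k$ for $1\leq k\leq n-1$, which is exactly the claimed formula.

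I do not anticipate any real obstacle: the entire content is the identity $1-\beta=2\cdot 1-(1+\beta)$ together with the closed form for $(1+\beta)^{-1}$ already established in Theorem \ref{THM-inver-si-o(x)=?}. The only point meriting a word of care is that invertibility of $1+\beta$ must be invoked first, which is why the hypothesis is restricted to $n\equiv 2,4\pmod 6$; the excluded case $n\equiv 0\pmod 6$ is genuine, as there $1+\beta$ is not a unit and no Cayley unitary element is obtained.
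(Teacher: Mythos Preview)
Your argument is correct. The identity $1-\beta=2\cdot 1-(1+\beta)$ gives $u_{[\beta]}=2(1+\beta)^{-1}-1$ immediately, and reading off coefficients against the basis $\{1,x,\ldots,x^{n-1}\}$ yields exactly $b_0=2a_0-1$ and $b_k=2a_k$ for $1\le k\le n-1$.

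This is a genuinely different route from the paper's. The paper expands the product $(1-x-x^{-1})\sum_{j}a_jx^j$ directly, obtaining $b_0=a_0-a_{n-1}-a_1$ and $b_k=a_k-a_{k-1}-a_{k+1}$, and then appeals to the linear system established in the proof of Proposition~\ref{reg-ai} (namely $a_{n-1}+a_0+a_1=1$ and $a_{k-1}+a_k+a_{k+1}=0$) to simplify these to $2a_0-1$ and $2a_k$. Your approach sidesteps this entirely: it never uses the three-term recurrence for the $a_k$'s, only the bare fact that $(1+\beta)^{-1}$ exists and has the stated expansion. The trade-off is that the paper's computation makes visible \emph{why} the doubling occurs at the level of individual coefficients via the recurrence, whereas yours explains it globally via the functional identity $(1-t)(1+t)^{-1}=2(1+t)^{-1}-1$. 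Your version is shorter and more robust (it would work verbatim for any skew element $\beta$ with $1+\beta$ invertible, not just $\beta=x+x^{-1}$), while the paper's ties the result more tightly to the specific structure of the inverse computed in Proposition~\ref{reg-ai}.
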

\begin{proof}
Let $x\in G$ be an element with $n=\mathfrak{o}(x)=6t+2$ or $n=\mathfrak{o}(x)=6t+4$ ($\mathfrak{o}(x)\geq 4$) and let $\beta=x+x^{-1}$.
From Theorem \ref{THM-inver-si-o(x)=?} we have that $1+\left(x+x^{-1}\right)$ is invertible and then the Cayley unitary element built
out of $\beta$ is in the way $b_0 +b_1 x+ b_2 x^2 + \cdots + b_{n-1} x^{n-1}$.

Thus,
\begin{align*}
  u_{[\beta]} & = (1-\beta)(1+\beta)^{-1} \\
          & = (1-\left(x+x^{-1}\right))(1+\left(x+x^{-1}\right))^{-1} \\
          &= (1-\left(x+x^{-1}\right))(a_0 + a_1 x + a_2x^2+ \cdots +a_{n-1}x^{n-1}).
\end{align*}

After expand this last product and by expression \eqref{eq:casos}, we obtain that for $1\leq k \leq n-1$

\begin{align*}
  b_0& = - a_{n-1} + a_0 - a_1 =2a_0-1    \\
  b_k& = - a_{k-1} + a_k - a_{k+1}=2a_{k}.
\end{align*}
\end{proof}

In the rest of this section we shall assume that $\mathbf{F}=\mathbb{Q}$, the field of rational numbers, to present some examples of
Cayley unitary elements in specific rational group algebras $\mathbb{Q}G$. In the sequel, we denote by $S_3$ the symmetric group of degree
$3$, by $\mathcal{D}_4$ and $\mathcal{Q}_8$ the dihedral group and quaternion group of order $8$, respectively, and by $C_n$ the
cyclic group of order $n$.

Recall that the set $\mathbf{F}G^{-}$ of skew-symmetric elements of $\mathbf{F}G$ in regard to the oriented classical involution, as
an $\mathbf{F}$-module, is spanned by the set
$$
\mathcal{L}=\underbrace{\{g-g^{-1} : g^2 \neq 1, g\in N \}}_{\mathcal{L}_1}\cup \overbrace{\{g: g^2 =1, g\not\in N\}}^{\mathcal{L}_2}\cup
\underbrace{\{g+g^{-1} : g^2 \neq 1, g\not\in N \}}_{\mathcal{L}_3}.
$$

\begin{rem}\label{rem1}
\begin{enumerate}
\item It is clear that if the involution on $\mathbf{F}G$ is the oriented classical one, $N$ is a subgroup of $\mathcal{U}n(\mathbf{F}G)$. Moreover, by Corollary
      \ref{cor1} $N\not\subset \mathcal{U}n^{C}(\mathbf{F}G)$ if and only if $N$ has even order. Thus, if $y\in N$ has even order $n>1$, by Proposition \ref{Propo-cay.sii.inver}, we can
      ask if $y$ is a product of two elements in $\mathcal{U}n^{C}(\mathbf{F}G)$. In general, the answer is no. In fact, if
      $G=S_3=\langle x, y: x^3=1, y^2=1, x^y=x^{-1}\rangle$ and $\sigma$ is trivial, then $y$ is not a product of two Cayley unitary elements in $\mathbb{Q}S_3$,
      because every skew-symmetric element in this group algebra is of the form $q(x-x^{-1})$, where $q\in \mathbb{Q}$ and thus, $(1+y)-(1-y)q(x-x^{-1})=(1-qx+qyx)(1+y)$. Since $1+y$ is a zero divisor, our claim follows, i.e., $y$ cannot be a Cayley unitary
      element.

\item Note that if $z-z^{-1}\in \mathcal{L}_1$ is such that $1+(z-z^{-1})$ is invertible, then by Vieira and Ribeiro, Theorem \ref{Coef-Fibo}, for any $q\in \mathbb{Q}$, $u_{[q(z-z^{-1})]}$ there exists. For instance, if $\mathfrak{o}(z)=4$, it follows that
      $$
      u_{[q(z-z^{-1})]}=\frac{1}{1+4q^2}\big(1-2qz+4q^2z^2+2qz^3\big), ~q\in \mathbb{Q}.
      $$

\item As we pointed out in Remark \ref{construct_cayley}, if $w\in \mathcal{L}_2$, then for $q\in \mathbb{Q}\setminus \{\pm 1\}$,
      $$
      \big(1+qw\big)^{-1}=\dfrac{1}{1-q^2}-\dfrac{q}{1-q^2}w \quad \text{ and } \quad u_{[qw]}=\frac{1+q^2}{1-q^2}-\frac{2q}{1-q^2}w.
      $$

\item Finally, if the skew-symmetric element $\beta\in \mathcal{L}_3$, i.e., $\beta=x+x^{-1}$ for some $x\in G$ with
      even order $\mathfrak{o}(x)\geq 4$, $\mathfrak{o}(x)\equiv 2,4 \pmod{6}$ and $\sigma(x)=-1$, then $1+(x+x^{-1})$ always is
      invertible, by Proposition \ref{reg-ai}. Thus, the Cayley unitary element $u_{[x+x^{-1}]}$ is obtained from Theorem \ref{THM-Cayleyuni-si-o(x)=?}.
      In particular, if $\mathfrak{o}(x)=4$, it follows that
\begin{align*}
\Big(1+(x+x^{-1})\Big)^{-1} & = a_0+a_1x+a_2x^2+a_3x^3 \\
                            & = \frac{1}{3}+\frac{1}{3}x-\frac{2}{3}x^2+\frac{1}{3}x^3 %\tag{Proposition ~\ref{reg-ai}}
\end{align*}

and,

\begin{align}
u_{[x+x^{-1}]} &= \Big(1-(x+x^{-1})\Big)\Big(1+(x+x^{-1})\Big)^{-1} \nonumber  \\     %\tag{By definition} \\
               &= \Big(1-(x+x^{-1})\Big)\Big(\frac{1}{3}+\frac{1}{3}x-\frac{2}{3}x^2+\frac{1}{3}x^3\Big) \nonumber \\
							&= b_0+b_1x+b_2x^2+b_3x^3 = -\frac{1}{3}+\frac{2}{3}x-\frac{4}{3}x^2+\frac{2}{3}x^3. \label{eq:S3} %\tag{Theorem \ref{THM-Cayleyuni-si-o(x)=?}}\\
\end{align}

\end{enumerate}

\end{rem}

\begin{examp}
We consider on $S_3$ the non-trivial orientation $\sigma$ given by $\sigma(x)=1$ and $\sigma(y)=-1$. Thus, $x-x^{-1}\in \mathcal{L}_1$,
$y\in \mathcal{L}_2$ and for $q\in \mathbb{Q}$ and $t\in \mathbb{Q}\setminus \left\{\pm 1\right\}$, $q(x-x^{-1})$ and $ty$ are
skew-symmetric elements. In the second case, it follows by Remark \ref{rem1}(3) that $u_{[ty]}=\frac{1+t^2}{1-t^2}-\frac{2t}{1-t^2}y$.
In the former case, using Theorem \ref{Coef-Fibo}, the Cayley unitary element of $\mathbb{Q}S_3$ obtained out of $q(x-x^{-1})$ is given by
$$
u_{[q(x-x^{-1})]}=\frac{1}{1+3q^2}\big(1-q^2+2q(q-1)x+2q(q+1)x^2\big), ~q\in \mathbb{Q}.
$$
\end{examp}

\begin{examp}[Rational quaternion group algebra]
We consider on the quaternion group of order $8$, $\mathcal{Q}_8=\langle x, y: x^4=1, x^2=y^2, y^{-1}xy=x^{-1}\rangle$, different possibilities for
the non-trivial orientation $\sigma$:
\begin{enumerate}
\item If $\sigma(x)=1$ and $\sigma(y)=-1$, $x-x^{-1}\in \mathcal{L}_1$ and $y+y^{-1}, ~xy+(xy)^{-1}\in \mathcal{L}_3$.
      It follows, using the Remark \ref{rem1}(2)(4), that the associated Cayley unitary elements, respectively, are:
$$
u_{[q(x-x^{-1})]}=\frac{1}{1+4q^2}\big(1-2qx+4q^2x^2+2qx^3\big), ~q\in \mathbb{Q}, \text{ and }
$$

\begin{equation}\label{eq:Q8}
u_{[z+z^{-1}]} = -\frac{1}{3}+\frac{2}{3}z-\frac{4}{3}z^2+\frac{2}{3}z^3, \text{ where $z=y$ or $z=xy$. }
\end{equation}

\item In case $\sigma(x)=-1$ and $\sigma(y)=1$, it follows that $y-y^{-1}\in \mathcal{L}_1$ and $x+x^{-1}, ~xy+(xy)^{-1}\in \mathcal{L}_3$
      and thus, the Cayley unitary elements built out of from these skew-symmetric elements are similar to the previous one.

\item Now assume that $\sigma(x)=-1=\sigma(y)$. Since that $x_0=xy$ and $y$ generate $\mathcal{Q}_8$ and $\sigma(x_0)=1$, $\sigma(y)=-1$, then we are in similar conditions as in the first case.
\end{enumerate}

\end{examp}

\begin{examp}[Rational dihedral group algebra]\label{example:D4}Let us consider the finite dihedral group algebra $\mathbb{Q}\mathcal{D}_4$, where $\mathcal{D}_4=\langle x,y: x^4=1=y^2, (xy)^2=1 \rangle $. So considering exactly
the same possibilities for $\sigma$ and following the lines of the previous example, we get that:
\begin{enumerate}
\item For $\sigma(x)=1$ and $\sigma(y)=-1$:
	\begin{align*}
        u_{[q(x-x^{-1})]}&=\frac{1}{1+4q^2}\big(1-2qx+4q^2x^2+2qx^3\big), ~q\in \mathbb{Q},\\[0.2cm]
        u_{[tw]}&=\frac{1+t^2}{1-t^2}-\frac{2t}{1-t^2}w, ~w=y \text{ or } xy ~\text{ and } t\in \mathbb{Q}\setminus\{\pm 1\}.
	\end{align*}
\item For $\sigma(x)=-1$ and $\sigma(y)=1$:
	\begin{align}
u_{[x+x^{-1}]}&= -\frac{1}{3}+\frac{2}{3}x-\frac{4}{3}x^2+\frac{2}{3}x^3,\label{eq:D41}\\[0.2cm]
u_{[t(xy)]}&=\frac{1+t^2}{1-t^2}-\frac{2t}{1-t^2}(xy), \quad u_{[t(x^3y)]}=\frac{1+t^2}{1-t^2}-\frac{2t}{1-t^2}(x^3y),~t\in \mathbb{Q}\setminus\{\pm 1\}. \nonumber
	\end{align}
\item For $\sigma(x)=-1$ and $\sigma(y)=-1$:
\begin{align}
u_{[x+x^{-1}]}&= -\frac{1}{3}+\frac{2}{3}x-\frac{4}{3}x^2+\frac{2}{3}x^3,\label{eq:D42}\\[0.2cm]
		u_{[t(x^2y)]}&=\frac{1+t^2}{1-t^2}-\frac{2t}{1-t^2}(x^2y), \quad u_{[ty]}=\frac{1+t^2}{1-t^2}-\frac{2t}{1-t^2}y,~t\in \mathbb{Q}\setminus\{\pm 1\}.\nonumber
	\end{align}
\end{enumerate}
\end{examp}
\section{Conclusions and future work}
We finish this manuscript given some conclusions and doing some points that can be seen as future work.
\begin{itemize}
	\item We observe from \eqref{eq:S3}, \eqref{eq:Q8}, \eqref{eq:D41} and \eqref{eq:D42} that however $S_3$, $\mathcal{Q}_8$ and $\mathcal{D}_4$ are different groups, the form of the Cayley unitary element $u_{[z+z^{-1}]}$ is similar. In other words, the form of $u_{[z+z^{-1}]}$ depends just on $\mathfrak{o}(z)$. Therefore, to know its representation we can always work as taking $z$ in $C_n$ and just consider the order of $z$ as is shown in Table \ref{Tabla-Cayley-orien}.
	\item From Example \ref{example:D4}, we highlight that the set of Cayley unitary elements obtained studied in this manuscript is bigger than the one obtained in group algebras with the classical orientation given in \cite{VieiraRibeiro:06}. These new units maybe can be considered in the applications that use units from group algebras, for instance in cryptography and coding theory.
	\item In this research, we obtain a formula for the Caylet unitary elements generated from $x+x^{-1}\in \mathcal{L}_1$, but it is an open question to find a formula for $u_{[(q(x+x^{-1})]}$, with $1\neq q\in \mathbf{F}$. More generally, it can be considered to find a closed formula for $u_{[\beta]}$, where $\beta$ is a linear combination of elements of $\mathbf{F}G^{-}$.
\end{itemize}

\begin{table}[H]
\begin{center}
\begin{spacing}{2.5}
\begin{longtable}{|c||c|}
  \hline
  % after \\: \hline or \cline{col1-col2} \cline{col3-col4} ...
  $\mathfrak{o}(z)$ &  $u_{[\beta]}=\left(1-\left(z+z^{\mathfrak{o}(z)-1}\right)\right)\left(1+z+z^{\mathfrak{o}(z)-1}\right)^{-1}$ \\
  \hline
  \hline
  $4$ &  $-\frac{1}{3} + \frac{2}{3}z -\frac{4}{3} z^2+ \frac{2}{3}z^3$ \\
  \hline
  $8$ & $-\frac{5}{3} + \frac{4}{3}z -\frac{2}{3} z^2 - \frac{2}{3}z^3 + \frac{4}{3}z^4 -\frac{2}{3} z^5 - \frac{2}{3}z^6  +\frac{4}{3}z^7$  \\
  \hline
  $10$ & $-\frac{1}{3} + \frac{2}{3}z -\frac{4}{3} z^2 + \frac{2}{3}z^3+ \frac{2}{3}z^4 -\frac{4}{3}z^5 + \frac{2}{3}z^6 + \frac{2}{3}z^7 -\frac{4}{3}z^8 +\frac{2}{3}z^9$ \\
   \hline
  $14$ & $-\frac{5}{3} + \frac{4}{3}z -\frac{2}{3} z^2 - \frac{2}{3}z^3 + \frac{4}{3}z^4 -\frac{2}{3} z^5 - \frac{2}{3}z^6 + \frac{4}{3}z^7 - \cdots +\frac{4}{3}z^{13}$ \\
   \hline
  $16$ & $-\frac{1}{3} + \frac{2}{3}z -\frac{4}{3} z^2+ \frac{2}{3}z^3+ \frac{2}{3}z^4 -\frac{4}{3}z^5 + \frac{2}{3}z^6+ \frac{2}{3}z^7 -\frac{4}{3}z^8+ \cdots +\frac{2}{3}z^{15}$ \\
  \hline
\end{longtable}
\caption{Cayley unitary elements built out of $\beta=(z+z^{-1})$ with $\sigma(z)=-1$.}
\label{Tabla-Cayley-orien}
\end{spacing}
\end{center}
\end{table}

%%%%%%%%%%%%%%%%%%%%%%%%%%%%%%%%%%%%%%%%%%%%%%%%%%%%%%%%%%%%%%%%%%%%%%%%%%
\section*{Acknowledgements}
%%%%%%%%%%%%%%%%%%%%%%%%%%%%%%%%%%%%%%%%%%%%%%%%%%%%%%%%%%%%%%%%%%%%%%%%%%

Y. W. G\'omez-Esp\'indola and A. Holgu\'{i}n-Villa were partially supported by Escuela de Matem\'aticas
- Facultad de Ciencias at Universidad Industrial de Santander. A. Holgu\'{i}n-Villa is member of the research group Álgebra y Combinatoria (ALCOM). J. H. Castillo was partially supported
by Vicerrector\'ia de Investigaciones e Interacci\'on Social at Universidad de Nari\~no  and is member of the research group Algebra, Teor\'ia de N\'umeros y Aplicaciones: ERM (ALTENUA).

\bibliographystyle{plain}
%\bibliography{bibliografia}  % associado ao arquivo: 'bibliografia.bib'

\end{document}